\theoremstyle{plain}
\newtheorem{thm}{Theorem}[section]
\newtheorem{lem}[thm]{Lemma}
\theoremstyle{definition}
\theoremstyle{remark}
\newtheorem*{rem}{Remark}
\begin{document}

\title{On global minimizers of repulsive-attractive power-law interaction energies}

\author[1]{Jos\'e Antonio Carrillo\thanks{Author for correspondence.
E-mail: carrillo@imperial.ac.uk}}
\author[2]{Michel Chipot}
\author[1]{Yanghong Huang}

\affil[1]{Department of Mathematics, Imperial College London, London SW7 2AZ}
\affil[2]{Institut f\"ur Mathematik, Angewandte Mathematik,
Winterthurerstrasse 190, CH-8057 Z\"urich, Switzerland}


\renewcommand\Authands{ and }
\maketitle

\begin{abstract}
We consider the minimisation of power-law repulsive-attractive interaction energies which occur in many biological and physical situations. We show existence of global minimizers in the discrete setting and get bounds for their supports independently of the number of Dirac Deltas in certain range of exponents. These global discrete minimizers correspond to the stable spatial profiles of flock patterns in swarming models. Global minimizers of the continuum problem are obtained by compactness. We also illustrate our results through numerical simulations.
\end{abstract}

\section{Introduction}
Let $\mu$ be a probability measure on $\mathbb{R}^d$. We are interested in minimizing 
the interaction potential energy defined by
\begin{equation}\label{eq:contE}
E[\mu] = \frac12\int_{\mathbb{R}^d\times\mathbb{R}^d} W(x-y)
d\mu(x)d\mu(y)\,.
\end{equation}
Here, $W$ is a repulsive-attractive power-law potential
\begin{equation}\label{eq:kernelW}
W(z) = w(|z|) = \frac{|z|^\gamma}{\gamma}
-\frac{|z|^\alpha}{\alpha},\quad \gamma > \alpha \,,
\end{equation}
with the understanding that $\frac{|z|^\eta}{\eta}=\log |z|$ for $\eta=0$. Moreover, we define $W(0)=+\infty$ if $\alpha\leq 0$. This is 
the simplest possible potential that is repulsive in the short range and attractive 
in the long range. Depending on the signs of the exponents $\gamma$ and $\alpha$, 
the behaviour of the potential is depicted in Figure~\ref{fig:wfun}. Since this 
potential $W$ is bounded from below by $w(1)=\frac{1}{\gamma}-\frac{1}{\alpha}$, 
the energy $E[\mu]$ always makes sense,  with possibly positive infinite values.

\begin{figure}[thp]
\begin{center}
\includegraphics[totalheight=0.175\textheight]{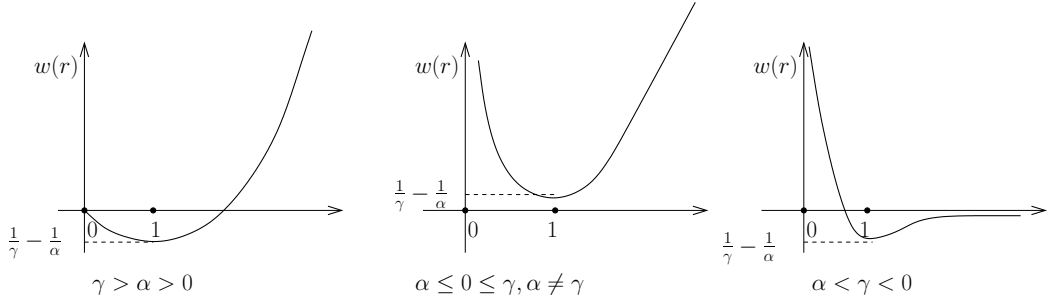}
\end{center}
\caption{Three different behaviours of $w(r) = \frac{r^\gamma}{\gamma}
-\frac{r^\alpha}{\alpha}$, $\gamma>\alpha$.}
\label{fig:wfun}
\end{figure}

The minimizers of the energy $E[\mu]$ are related to  
stationary states for the aggregation equation $\rho_t = \nabla\cdot(\rho \nabla W*\rho)$ 
studied in~\cite{Carrillo-McCann-Villani03,
Carrillo-McCann-Villani06,BCL,CDFLS,BLL} with 
repulsive-attractive potentials~\cite{FellnerRaoul1,FellnerRaoul2,FHK,FH,Raoul,BCLR,BCLR2}. 
The set of local minimizers of the interaction energy, in both the discrete setting of 
empirical measures (equal mass Dirac Deltas) and the continuum setting of general probability 
measures,  can exhibit rich complicated structure as studied numerically in \cite{KSUB,BCLR2}. 
In fact, it is shown in \cite{BCLR2} that the dimensionality of the support of local minimizers 
of~\eqref{eq:contE} depends on the strength of the repulsion at zero of the potential $W$. In other 
words, as the repulsion at zero gets stronger (i.e., $\alpha$ gets smaller), the support of local 
minimizers gets larger in Hausdorff dimension.

From the viewpoint of applications, these models with nonlocal interactions are ubiquitous 
in the literature. Convex attractive potentials appear in granular  media~\cite{BenedettoCagliotiCarrilloPulvirenti98,LT,Carrillo-McCann-Villani03,Carrillo-McCann-Villani06}.  More sophisticated potentials like~\eqref{eq:kernelW} are included to take into account short  range repulsion and long range attraction in kinematic models of collective behaviour of animals, see \cite{Mogilner2,Dorsogna,Mogilner2003,KSUB,KCBFL} and the references therein. The minimization of the interaction energy in the discrete settings is of paramount importance for the structure of virus capsides \cite{Viral_Capside}, for self-assembly materials in chemical engineering design \cite{Wales1995,Wales2010,Rechtsman2010}, and for flock patterns in animal swarms \cite{BUKB,soccerball,CHM}.

Despite the efforts in understanding the qualitative behaviour of stationary solutions to the aggregation equation $\rho_t = \nabla\cdot(\rho \nabla W*\rho)$ and the structure of local minimizers of the interaction energy $E[\mu]$, there are no general results addressing the global minimization of $E[\mu]$ in the natural framework of probability measures. 
See \cite{CFT} for a recent analysis of this question in the more restricted set of bounded or binary densities. Here, we will first try to find solutions in the restricted set of atomic measures. 

The interest of understanding the global discrete minimizers of the interaction energy is not purely mathematical. The discrete global minimizers will give the spatial profile of typical flocking patterns obtained in simplified models for social interaction between individuals as in \cite{Albietal,KSUB} based on the famous 3-zones models, see for instance \cite{Huth:Wissel,lukeman}. Moreover, due to the recent nonlinear stability results in \cite{CHM}, we know now that the stability properties of the discrete global minimizer as stationary solution of the first order ODE model
$$
\dot{x}_i=-\sum_{j\neq i}^n \nabla
W\left(x_i-x_j\right), \quad i=1,\dots, n\,,
$$
lead to stability properties of the flock profiles for the second order model in swarming introduced in \cite{Dorsogna} or with additional alignment mechanisms as the Cucker-Smale interaction \cite{CS1,CS2}, see also \cite{Albietal} and the discussion therein.

Our objective is to show the existence of global minimizers of the interaction energy defined 
on probability measures under some conditions on the exponents. Our approach starts with 
the discrete setting by showing qualitative properties about the global minimizers in the 
set of equal mass Dirac Deltas. These discrete approximations are used extensively in the literature
to show various properties of the minimizers~\cite{Dorsogna,FHK,KSUB,BUKB}, but the existence
as well as the convergence of these discrete minimizers is not justified in general. In a certain range of exponents, 
we will prove that the diameter of the support of discrete minimizers does not depend on the 
number of Dirac Deltas. This result together with standard 
compactness arguments will result in our desired global minimizers among probability measures.

In fact, our strategy to show the confinement of discrete minimizers is in the same spirit as 
the proof of confinement of solutions of the aggregation equation in~\cite{CDFLS2,BCY}. In 
our case, the ideas of the proof in Section 2 will be based on convexity-type arguments 
in the range of exponents $\gamma>\alpha\geq 1$ to show the uniform bound in the diameter of 
global minimizers in the discrete setting. Section 3 will be devoted to more refined results in 
one dimension. We show that for very repulsive potentials, the bounds on the diameter is not uniform
in the number of Dirac Deltas, complemented by numerical simulations; in the range of exponents
$\gamma>1>\alpha$, the minimizers turn out to be unique (up to translation), analogous to the simplified 
displacement convexity in 1D; in the special case $\gamma=2$ and $\alpha=1$, we can find the 
minimizers and show the convergence to the continuous minimizer explicitly.


\section{Existence of Global minimizers}

We will first consider the discrete setting where $\mu$ is a convex combinations of Dirac Deltas, i.e.,
\[
\mu = \frac{1}{n} \sum_{i=1}^n \delta_{x_i},\quad x_i \in \mathbb{R}^d.
\]
Setting
\begin{equation}\label{eq:sumEng}
E_n(x_1,\cdots,x_n)=\sum_{i\neq j}^n \left(\frac{|x_i-x_j|^\gamma}{\gamma}-\frac{|x_i-x_j|^\alpha}{\alpha}\right)\,,
\end{equation}
for such a $\mu$ one has
$
E[\mu] = \frac{1}{2n^2} E_n(x_1,\cdots,x_n)\,.
$
In the definition of the energy, we can include the self-interaction for non singular cases, $\alpha>0$, since both definitions coincide. Fixing $W(0)=+\infty$ for singular kernels makes $W$ upper semi-continuous, and the self-interaction must be excluded to have finite energy configurations.

Let us remark that due to translational invariance of the interaction energy, minimizers of the 
interaction energy $E[\mu]$ can only be expected up to translations. Moreover, when the 
potential is radially symmetric, as in our case, then any isometry in $\mathbb{R}^d$ will also 
leave invariant the interaction energy. These invariances are also inherited by the discrete 
counterpart $E_n(x_1,\cdots,x_n)$. We will first consider the minimizers of $E_n(x)$ among 
all $x=(x_1,\cdots,x_n)\in (\mathbb{R}^d)^n$, and then the convergence to the global minimizers of $E[\mu]$
as $n$ goes to infinity.

\subsection{Existence of minimizer: Discrete setting}
Let us consider for $\alpha <\gamma$, the derivative of the radial potential
\[
    w'(r) = r^{\gamma-1}-r^{\alpha-1} =
    r^{\alpha-1}\left(r^{\gamma-\alpha}-1\right)\,,
\]
which obviously vanishes for $r=1$ and for $r=0$ when $\alpha>1$. We conclude from the 
sign of derivatives, that $w(r)$ attains always a global minimum at $r=1$. There are, following 
the values of $\alpha<\gamma$, three types of behaviours for $w$ that are shown in 
Figure~\ref{fig:wfun}. In all the three cases, $E_n$ is bounded from below since
\[
    E_n(x) \geq n^2\left(\frac{1}{\gamma}-\frac{1}{\alpha}\right)\,,
\]
with the understanding that $\frac{|x|^\eta}{\eta}=\log |x|$ for $\eta=0$. We set
\begin{equation}\label{eq:Indefn}
    I_n = \inf_{x \in (\mathbb{R}^d)^n} E_n(x).
\end{equation}
Using the translational invariance of $E_n(x_1,\cdots,x_n)$, we can assume without 
loss of generality that $x_1=0$ what we do along this subsection. First we have the following lemma
showing that $I_n$ is achieved, which can be proved by discussing different ranges of the 
exponents $\gamma$ and $\alpha$.

\begin{lem} For any finite $n(\geq 2)$, the minimum value $I_n$ is obtained for
  some discrete minimizers in $(\mathbb{R}^d)^n$ .
\end{lem}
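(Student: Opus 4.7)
The plan is to produce a minimizer by compactness on a minimizing sequence, organizing the argument according to the three qualitative shapes of $w$ in Figure~\ref{fig:wfun}. By translational invariance I fix $x_1=0$, and since $E_n\geq n(n-1)w(1)$ the infimum $I_n$ is finite; so I pick a minimizing sequence $x^{(k)}=(x_1^{(k)},\dots,x_n^{(k)})$ with $E_n(x^{(k)})\to I_n$. The task then reduces to showing $x^{(k)}$ is bounded in $(\mathbb{R}^d)^n$: once this holds, Bolzano--Weierstrass produces a subsequential limit $x^{*}$ and lower semicontinuity of $W$ (with $W(0)=+\infty$ in the singular cases $\alpha\leq 0$) transfers to $E_n$, giving $E_n(x^{*})\leq I_n$; finiteness of $I_n$ then forbids two coordinates of $x^{*}$ from coinciding when $\alpha\leq 0$, so $x^{*}$ is a legitimate minimizer.

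Whenever $\gamma\geq 0$ (the first two regimes of the figure), $w(r)\to +\infty$ as $r\to\infty$ and coercivity is automatic: if some $|x_i^{(k)}|$ were unbounded, the single pair $\{1,i\}$ would send $E_n(x^{(k)})\to +\infty$, contradicting minimality. The delicate regime is $\alpha<\gamma<0$, where $w(r)\to 0^{-}$ at infinity and this crude coercivity is lost. I would handle it by strong induction on $n$, the base $n=2$ being attained at any pair at unit distance. Passing to a subsequence so that each pairwise distance $|x_i^{(k)}-x_j^{(k)}|$ either converges or diverges to $+\infty$, the relation ``$i\sim j$ iff the distance stays bounded'' is an equivalence relation (by the triangle inequality), producing clusters $\{1,\dots,n\}=C_1\cup\cdots\cup C_m$ with $C_1\ni 1$ bounded. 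If $m=1$ the sequence is bounded and we are done.

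If $m\geq 2$, after centering each cluster I extract limits $y^{(l)}$ of $n_l=|C_l|$ points, and since inter-cluster distances diverge while $w\to 0$ at infinity, lower semicontinuity yields $I_n\geq \sum_l E_{n_l}(y^{(l)})\geq \sum_l I_{n_l}$ (with $I_1:=0$). I would contradict this by building a competitor: by the inductive hypothesis each $I_{n_l}$ with $n_l\geq 2$ is realized by some bounded configuration $z^{(l)}$, which I translate so that the $m$ clusters sit at mutual distance $\geq R$. Because $w$ is strictly negative on $(r_0,\infty)$ for some $r_0$ in this regime (it approaches $0$ from below), for $R$ large enough each of the finitely many inter-cluster pairs contributes strictly negatively, so $E_n(z)<\sum_l I_{n_l}$ and hence $I_n<\sum_l I_{n_l}$, contradicting the previous inequality. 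So $m=1$ and the minimizing sequence is bounded, finishing the proof. The main obstacle is exactly this strict subadditivity in the non-coercive regime, which relies crucially on $w$ being strictly negative at large distances---the qualitative feature distinguishing the third case of Figure~\ref{fig:wfun} from the other two.
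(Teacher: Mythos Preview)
Your argument is correct. In the coercive regimes $\gamma\geq 0$ you and the paper do essentially the same thing: a single diverging pair forces $E_n\to+\infty$ against the uniform lower bound $w\geq w(1)$, and then (lower semi)continuity on a compact set yields the minimizer. The paper just records more explicit radii.

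The genuine difference is in the non-coercive case $\alpha<\gamma<0$. The paper does \emph{not} use induction or concentration--compactness. Instead it modifies the minimizing sequence directly: after ordering the points by one coordinate, any gap larger than $1$ between consecutive coordinates is shrunk to exactly $1$ by rigidly translating the upper block; because all cross-block distances remain $\geq 1$ and $w$ is increasing on $[1,\infty)$, this can only lower $E_n$. Iterating over all coordinate directions confines the (improved) minimizing sequence to $[0,n-1]^d$, and compactness finishes. Your route, by contrast, splits a putatively unbounded minimizing sequence into clusters, uses $w(r)\to 0^{-}$ to get $I_n\geq\sum_l I_{n_l}$, and then contradicts this by the strict subadditivity $I_n<\sum_l I_{n_l}$ obtained by placing inductively-found optimal clusters far apart so that every inter-cluster interaction is strictly negative. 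The paper's argument is more elementary and yields an explicit $n$-dependent box; your argument is more robust, relying only on $w\to 0^-$ at infinity and lower semicontinuity rather than on monotonicity of $w$ on $[1,\infty)$, so it would transfer to potentials for which a coordinate-wise shifting trick is unavailable.
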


\vskip 6pt

\underline{The case $0<\alpha<\gamma$.} We claim that
\begin{equation}\label{eq:Case1Bound}
   n^2\left( \frac{1}{\gamma}-\frac{1}{\alpha} \right)
    \leq I_n <0.
\end{equation}
Indeed consider $x = (x_1,\cdots,x_n) \in (\mathbb{R}^d)^n$ such
that $x_1,\cdots,x_n$ are aligned and $|x_{i}-x_{i+1}|=\frac{1}{n}$.
Then for any $i,j$ one has
$
    0 < |x_i-x_j|\leq 1
$
and $w(|x_i-x_j|)<0$. Therefore~\eqref{eq:Case1Bound} follows.

Let us show that the infimum $I_n$ is achieved.
Let $x\in(\mathbb{R}^d)^n$. Set $R = \max_{i,j} |x_i-x_j|$. A 
minimizer is sought among the points such that $E_n(x)<0$ and one
has for such a point 
\[
    \frac{R^\gamma}{\gamma} 
    \leq \sum_{i,j} \frac{|x_i-x_j|^\gamma}{\gamma}
    < \sum_{i,j} \frac{|x_i-x_j|^\alpha}{\alpha}
    \leq n^2 \frac{R^\alpha}{\alpha}.
\]
This implies the upper bound 
\begin{equation}\label{eq:case1R}
    R \leq \left(\frac{n^2\gamma}{\alpha}\right)^{\frac{1}{\gamma-\alpha}}.
\end{equation}
Thus, since $x_1=0$, all the  $x_i$'s have to be in the ball
of center $0$ and radius
$\left(n^2\gamma/\alpha\right)^{\frac{1}{\gamma-\alpha}}$, i.e.,
$x$ has to be in a compact set of $(\mathbb{R}^d)^n$. Since $E_n(x)$ is 
continuous, the infimum $I_n$ is achieved. Note that the bound on the radius, where
all Dirac Deltas are contained, depends a priori on $n$.

\vskip 6pt

\underline{The case $\alpha \leq 0 \leq \gamma$ and $\alpha\neq\gamma$.} In this 
case $w(0^+)=+\infty$ and $w(\infty)=+\infty$. We minimize among all $x$ such that 
$x_i\neq x_j$ for $i\neq j$.  Note that $w$ and $I_n$ are both positive. Since $w(r)\to +\infty$
    as $r\to 0$ or $r\to \infty$, there exists $a_n,b_n>0$ such that
\begin{equation*}
a_n < 1<b_n,\quad w(a_n)=w(b_n)>I_n.
\end{equation*}
Let $x\in (\mathbb{R}^d)^n$. If for a couple $i,j$, one has
\begin{equation}\label{eq:Case2bound}
|x_i-x_j| <a_n \quad \mbox{or}\quad |x_i-x_j|>b_n
\end{equation}
then one has 
$
    E_n(x)>I_n.
$
Thus the infimum~\eqref{eq:Indefn} will not be achieved among the points $x$
satisfying~\eqref{eq:Case2bound} but among those in
\[
    \{ x\in (\mathbb{R}^d)^n\mid a_n \leq |x_i-x_j|\leq b_n
\}
\]
Since the set above is compact, being closed and contained in $(B(0,b_n))^n$
due to $x_1=0$, the infimum $I_n$ is achieved.

\vskip 6pt

\underline{The case $\alpha<\gamma<0$.} In this case $I_n <0$. Indeed it is
enough to choose 
\[
    |x_i-x_j|>1\qquad \forall i,j
\]
to get $E_n(x)<0$. Since $w(0^+)=+\infty$, we 
minimize $E_n$ among the points $x$ such that $x_i\neq x_j$, $i\neq j$.
Thus the summation is on $n^2-n$ couples $(i,j)$. Denote by
$
    x^k = (x_1^k,\cdots,x_n^k) \in (\mathbb{R}^d)^n
$
a minimizing sequence of $E_n$. Since $w(r)\to +\infty$
as $r\to 0$, there exists a number $a_n<1$ such that
\[
    w(a_n) > n(n-1)\left(\frac{1}{\alpha}-\frac{1}{\gamma}\right)>0.
\]
If for a couple $(i,j)$ one has $|x_i^k-x_j^k|<a_n$ then
\[
    E(x^k) > w(a_n) 
    + ({n^2-n-1})\left(\frac{1}{\gamma}-\frac{1}{\alpha}\right)
    >\left(\frac{1}{\alpha}-\frac{1}{\gamma}\right)>0
\]
and $x^k$ cannot be a minimizing sequence. So without loss of generality, we may assume that
$|x_i^k-x_j^k|\geq a_n,\, \forall i,j$. Let us denote by $y_1,\cdots,y_d$ the coordinates 
in $\mathbb{R}^d$. Without loss of generality, we can assume by relabelling and isometry invariance 
that for every $k$ one has
\[ 
    x_1^k =0,\quad x_{i}^k \in \{ y=(y_1,\cdots,y_d)\mid y_d\geq 0\}\,.
\]
Suppose that
$
    x_i^k = (x^k_{i,1},\cdots,x^k_{i,d})
$
and the numbering of the points is done in such a way that
$
    x_{i,d}^k\leq x_{i+1,d}^k.
$

We next claim that one can assume that $x_{i+1,d}^k-x_{i,d}^k \leq 1$, $\forall i$.
Indeed if not, let $i_0$ be the first index such that
\[
    x_{i_0+1,d}^k-x_{i_0,d}^k >1.
\]
Let us leave the first $x_i^k$ until $i_0$ unchanged and replace for
$i>i_0$ , $x_i^k$ by
\[
    \tilde{x}_i^k = x_i^k - \{x_{i_0+1,d}^k-x_{i_0,d}^k-1\}e_d
\]
where $e_d$ is the $d$-vector of the the canonical basis of
$\mathbb{R}^d$, 
i.e., we shift $x_i^k$ down in the direction $e_d$ by
$x_{i_0+1,d}^k-x_{i_0,d}^k-1$. Denote by $\tilde{x}_i^k$ the new
sequence obtained in this manner. One has
\begin{align*}
w(|\tilde{x}_i^k-\tilde{x}_j^k|) 
&< w(|x_i^k-x_j^k|),\quad \forall i\leq i_0<j, \\
w(|\tilde{x}_i^k-\tilde{x}_j^k|) 
&= w(|x_i^k-x_j^k|),\quad \forall i_0< i<j, \mbox{or } i<j\leq i_0
\end{align*}
and thus one has obtained a minimizing sequence with
\[
    \tilde{x}_{i_0+1,d}^k - \tilde{x}_{i_0,d}^k\leq 1,\qquad \forall i
\]
i.e., $0\leq x_{i,d}^k\leq n-1$, for all $i$.

Repeating this process in the other directions one can assume without loss
of generality that 
\begin{equation}\label{eq:case3R}
 x_i^k \in [0,n-1]^d
\end{equation}
for all $k$, i.e., that $x^k$ is in a compact subset of $(\mathbb{R}^d)^n$,
and extracting a convergent subsequence, we obtain our desired minimizer in $[0,n-1]^d$.

\subsection{Existence of minimizer: General measures}
The estimates~\eqref{eq:case1R} and~\eqref{eq:case3R} give estimates on the support 
of a minimizer of~\eqref{eq:Indefn}. However, these estimates depend on $n$. 
We will show now that the diameter of any minimizer of~\eqref{eq:Indefn} can sometimes
be bounded independently of $n$. 

\begin{thm}\label{thm:bndxn}
Suppose that $1\leq \alpha < \gamma$. Then the diameter of any global minimizer 
of $E_n$ achieving the infimum in \eqref{eq:Indefn} is bounded independently of $n$.
\end{thm}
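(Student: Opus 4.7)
The plan is to exploit the optimality of a minimizer $x = (x_1,\ldots,x_n)$ via a ``collapse'' perturbation combined with a symmetrization. Fix any pair of indices $i \neq k$ and let $y^{(i)} \in (\mathbb{R}^d)^n$ denote the configuration obtained by replacing the coordinate $x_i$ by $x_k$, leaving the other $n-1$ coordinates fixed. Because $\alpha \geq 1 > 0$ gives $W(0) = 0$, this $y^{(i)}$ is a legitimate competitor of finite energy, so minimality yields $E_n(x) \leq E_n(y^{(i)})$.

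A direct expansion of the double sum defining $E_n$, using only the symmetry $W(z) = W(-z)$, gives
\[
E_n(y^{(i)}) - E_n(x) = 2\bigl[W(0) - W(x_i - x_k)\bigr] + 2 \sum_{m \neq i, k} \bigl[W(x_k - x_m) - W(x_i - x_m)\bigr] \geq 0.
\]
Performing the same perturbation with the roles of $i$ and $k$ interchanged produces a second inequality whose sum over $m \neq i,k$ is the exact negative of the one above. Adding the two inequalities, the coupling to the remaining $n-2$ particles cancels completely and we are left with $W(x_i - x_k) \leq W(0) = 0$.

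Unfolding this scalar inequality with $r := |x_i - x_k|$ gives $r^\gamma/\gamma \leq r^\alpha/\alpha$, i.e.,
\[
|x_i - x_k| \leq \left(\frac{\gamma}{\alpha}\right)^{1/(\gamma - \alpha)}.
\]
Since $i, k$ were arbitrary, the diameter of any minimizer is controlled by this explicit constant, depending only on $\gamma$ and $\alpha$ and not on $n$, which is exactly the statement of the theorem.

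The main conceptual step is choosing the right perturbation and symmetrizing over $i \leftrightarrow k$ so that the bookkeeping terms from the other $n-2$ particles cancel exactly; this is where the hypothesis $\alpha \geq 1$ is used, since for $\alpha \leq 0$ the test configuration $y^{(i)}$ would carry infinite energy and the comparison would be vacuous. Everything else reduces to a routine rearrangement of double sums and a one-line scalar inequality for $w$.
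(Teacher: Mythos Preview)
Your argument is correct and is genuinely different from the paper's proof. The paper works at the level of the Euler--Lagrange system $\nabla_{x_k}E_n=0$: it subtracts the equations at the two extremal points, takes the inner product with $x_n-x_1$, and then invokes the $p$-Laplacian--type monotonicity inequality
\[
\bigl(|\eta|^{\gamma-2}\eta-|\xi|^{\gamma-2}\xi,\eta-\xi\bigr)\geq C_\gamma|\eta-\xi|^\gamma
\]
(with a variant for $1<\gamma<2$) to squeeze out an $n$-independent bound. Your proof bypasses all of this: the collapse $x_i\mapsto x_k$ is an admissible competitor because $W(0)=0$, and the symmetrization in $i\leftrightarrow k$ kills the interaction with the other $n-2$ particles exactly, leaving the clean pointwise inequality $w(|x_i-x_k|)\leq 0$, hence $|x_i-x_k|\leq(\gamma/\alpha)^{1/(\gamma-\alpha)}$.

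Two remarks on what each approach buys. First, your route is strictly more elementary: it uses only the global minimality (not even differentiability of $E_n$) and avoids the nontrivial external inequality \eqref{eq:gammaineq1}; it also produces a cleaner explicit constant. Second, and more interestingly, your argument does not actually use $\alpha\geq 1$ anywhere: the only place you invoke the hypothesis is to ensure $W(0)=0$, and for that $\alpha>0$ suffices. So your proof in fact establishes the uniform diameter bound over the full range $0<\alpha<\gamma$, which is strictly stronger than the stated theorem and partially settles the question raised later in Section~3.1 about confinement for $-1<\alpha<1$. You should point this out explicitly rather than overstating the role of $\alpha\geq 1$ in your last paragraph.
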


\begin{proof}
At a point $x=(x_1,\cdots,x_n) \in (\mathbb{R}^d)^n$ where the minimum of $E_n$ is achieved one has
\[
    0=\nabla_{x_k}E_n(x_1,\cdots,x_n)=\nabla_{x_k} \sum_{j\neq k} 
    \left(\frac{|x_{k}-x_j|^\gamma}{\gamma}-
    \frac{|x_k-x_j|^\alpha}{\alpha}\right),\quad k=1,2,\cdots,n.
\]
Since $\nabla_x \big(|x|^\eta/\eta\big) = |x|^{\eta-2}x$, we obtain
\begin{equation}\label{jjj}
    \sum_{j\neq k} |x_k-x_j|^{\gamma-2}(x_k-x_j)=\sum_{j\neq k}
    |x_k-x_j|^{\alpha-2}(x_k-x_j),\quad k=1,\cdots,n.
\end{equation}
Suppose the points are labelled in such a way that
\[
    |x_n-x_1|=\max_{i,j} |x_i-x_j|.
\]
Then for $k=1$ and $n$ in~\eqref{jjj}, we get
\begin{align*}
    \sum_{j\neq 1} |x_1-x_j|^{\gamma-2}(x_1-x_j)&=\sum_{j\neq 1}
    |x_1-x_j|^{\alpha-2}(x_1-x_j),\\
    \sum_{j\neq n} |x_n-x_j|^{\gamma-2}(x_n-x_j)&=\sum_{j\neq n}
    |x_n-x_j|^{\alpha-2}(x_n-x_j).
\end{align*}
By subtraction, this leads to
\begin{multline*}
    \sum_{j\neq 1,n} \Big( |x_n-x_j|^{\gamma-2}(x_n-x_j)-
    |x_1-x_j|^{\gamma-2}(x_1-x_j)\Big)
    +2|x_n-x_1|^{\gamma-2}(x_n-x_1) \\
= \sum_{j\neq n}|x_n-x_j|^{\alpha-2}(x_n-x_j)
-\sum_{j\neq 1}|x_n-x_j|^{\alpha-2}(x_1-x_j).
\end{multline*}
Taking the scalar product of both sides with $x_n-x_1$ we obtain
\begin{multline*}
    \sum_{j\neq 1,n} \Big( |x_n-x_j|^{\gamma-2}(x_n-x_j)-
    |x_1-x_j|^{\gamma-2}(x_1-x_j),x_n-x_1\Big)
    +2|x_n-x_1|^{\gamma} \\
= \sum_{j\neq n}|x_n-x_j|^{\alpha-2}(x_n-x_j,x_n-x_1)
-\sum_{j\neq 1}|x_n-x_j|^{\alpha-2}(x_1-x_j,x_n-x_1).
\end{multline*}

For $\gamma \geq 2$, there exists a constant $C_\gamma>0$ such that (see \cite{MC})
\begin{equation}\label{eq:gammaineq1}
    \big( |\eta|^{\gamma-2}\eta-|\xi|^{\gamma-2}\xi,\eta-\xi\big)
    \geq C_\gamma |\eta-\xi|^\gamma,\qquad 
    \forall \eta,\xi\in \mathbb{R}^d.
\end{equation}
Note that this is nothing else that the modulus of convexity (in the sense of~\cite{Carrillo-McCann-Villani06}) of the potential $|x|^\gamma$.
Thus estimating from above, we derive
\begin{align*}
\big( (n-2)C_\gamma+2\big)|x_n-x_1|^\gamma 
&\leq \sum_{j\neq n} |x_n-x_j|^{\alpha-1}|x_n-x_1|+
\sum_{j\neq 1}|x_1-x_j|^{\alpha-1}|x_n-x_1| \cr
&\leq 2(n-1)|x_n-x_1|^\alpha.
\end{align*}
Thus if $a\wedge b$  denotes the minimum of two numbers $a$ and $b$, we
derive
\[
    \big( C_\gamma\wedge 1\big) n |x_n-x_1|^\gamma 
    \leq 2(n-1)|x_n-x_1|^\alpha.
\]
That is
\[
    |x_n-x_1| \leq \left(\frac{2}{C_\gamma\wedge
    1}\frac{n-1}{n}\right)^{\frac{1}{\gamma-\alpha}}
    \leq \left(\frac{2}{C_\gamma\wedge 1}\right)^{\frac{1}{\gamma-\alpha}},
\]
which proves the theorem in the case $\gamma \geq 2$. In the case where
$1<\gamma<2$, one can replace~\eqref{eq:gammaineq1} with
\begin{equation*}
    \big( |\eta|^{\gamma-2}\eta-|\xi|^{\gamma-2}\xi,\eta-\xi\big)
    \geq c_\gamma\big\{|\eta|+|\xi|\big\}^{\gamma-2} |\eta-\xi|^2,\qquad 
    \forall \eta,\xi\in \mathbb{R}^d, 
\end{equation*}
for some constant $c_\gamma$, (see \cite{MC}). We  get, arguing as above,
\[
\sum_{j\neq 1,n} c_\gamma\big\{|x_n-x_j|+|x_1-x_j|\big\}^{\gamma}
+2|x_n-x_1|^\gamma
\leq 2(n-1)|x_n-x_1|^\alpha.
\]
No since $\gamma-2<0$, $|x_n-x_j|<|x_n-x_1|$ and $|x_1-x_j|
<|x_n-x_1|$, we derive that
\[
    \big( (n-2)c_\gamma 2^{\gamma-2}+2\big)|x_n-x_1|^\gamma 
    \leq 2(n-1)|x_n-x_1|^\alpha.
\]
We thus obtain the bound
\[
    |x_n-x_1| < \left\{ \frac{2}{(2^{\gamma-2}c_\gamma)\wedge
1}\right\}^{\frac{1}{\gamma-\alpha}},
\]
which completes the proof of the theorem.
\end{proof}

As a direct consequence of this bound independent of the number of Dirac Deltas, 
we can prove  existence of global minimizers in the continuous setting.

\begin{thm} Suppose that $1\leq \alpha <\gamma$. Then global minimizers associated to 
the global minimum of $E_n(x)$ with zero center of mass converge as $n\to\infty$ toward 
a global minimizer among all probability measures with bounded moments of order $\gamma$ 
of the interaction energy $E[\mu]$ in \eqref{eq:contE} .
\end{thm}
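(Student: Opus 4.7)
The plan is to use the uniform diameter bound from Theorem 2.1 to extract a narrowly convergent subsequence of discrete minimizers, pass to the limit in $E$ by weak continuity, and then prove that the limit is a global minimizer by comparing with atomic approximations of arbitrary competitors. Fix, for each $n\geq 2$, a discrete minimizer $x^{(n)}=(x_1^{(n)},\ldots,x_n^{(n)})$ of $E_n$ translated so that $\sum_i x_i^{(n)}=0$, and set $\mu_n=\frac{1}{n}\sum_{i=1}^n\delta_{x_i^{(n)}}$. By Theorem 2.1 the diameter of $\mathrm{supp}(\mu_n)$ is bounded by some $D$ independent of $n$; together with the centering this yields $\mathrm{supp}(\mu_n)\subset\overline{B(0,D)}$ for all $n$. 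Hence $\{\mu_n\}$ is tight and Prokhorov's theorem furnishes a subsequence $\mu_{n_k}$ converging narrowly to a probability measure $\mu^*$ supported in $\overline{B(0,D)}$.

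To pass to the limit in the energy I would use that, since $1\leq \alpha<\gamma$, the kernel $W$ is continuous on $\mathbb{R}^d$ with $W(0)=0$, and bounded on the compact set $\overline{B(0,2D)}$ that contains all differences $x-y$ with $x,y\in\mathrm{supp}(\mu_n)$. Narrow convergence $\mu_{n_k}\to\mu^*$ on $\overline{B(0,D)}$ implies narrow convergence $\mu_{n_k}\otimes\mu_{n_k}\to\mu^*\otimes\mu^*$ on $\overline{B(0,D)}^{2}$, so continuity and boundedness of $W$ on that compact set give $E[\mu_{n_k}]\to E[\mu^*]$. Because $W(0)=0$ one also has $E[\mu_n]=I_n/(2n^2)$, so the discrete minimal energy converges to $E[\mu^*]$.

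It remains to show that $\mu^*$ minimizes $E$ among probability measures $\nu$ with $\int|x|^\gamma\,d\nu<\infty$. For such $\nu$ the idea is to build equal-mass $n$-atomic approximations $\nu_n$ with $E[\nu_n]\to E[\nu]$: first truncate $\nu$ to $B(0,R)$ and renormalize, using the growth bound $|W(z)|\leq C_{\alpha,\gamma}(1+|z|^\gamma)$ together with $\int|x|^\gamma\,d\nu<\infty$ to make the tail contribution to $E[\nu]$ arbitrarily small as $R\to\infty$; then quantize the truncation by partitioning into $n$ pieces of mass $1/n$ and vanishing diameter and replacing each piece by a Dirac at its barycenter, so that narrow convergence and continuity of $W$ on bounded sets yield convergence of energies. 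A diagonal argument then produces $\nu_n$ with $E[\nu_n]\to E[\nu]$. Since $\mu_n$ minimizes $E$ over $n$-atomic equal-mass measures, $E[\mu_n]\leq E[\nu_n]$ for every $n$; specializing to $n=n_k$ and letting $k\to\infty$ gives $E[\mu^*]\leq E[\nu]$. The main obstacle is the atomic approximation step for competitors of possibly unbounded support: one needs simultaneous narrow and energetic convergence, quantified via the $\gamma$-moment, while the rest of the argument is routine narrow compactness combined with continuity of $W$ on bounded sets ensured by $\alpha\geq 1$.
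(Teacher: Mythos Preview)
Your argument is correct and follows essentially the same route as the paper: uniform confinement from Theorem~2.1, Prokhorov compactness, continuity of $W$ on bounded sets to pass to the limit, and approximation of an arbitrary competitor by equal-mass atomic measures to transfer the discrete minimality. The only cosmetic difference is that the paper packages the competitor approximation via convergence in the Wasserstein distance $d_\gamma$ (citing standard results) rather than your explicit truncate--quantize--diagonalize construction.
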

\begin{proof}
    Let $x^n\in(\mathbb{R}^d)^n$ be a minimizer of~\eqref{eq:sumEng} and 
\[
    \mu^n = \frac{1}{n}\sum_j^n \delta_{x^n_j}
\]
be the associated discrete measure. From Theorem~\ref{thm:bndxn}, the
radius of the supports of the measures $\mu^n$ are bounded uniformly in $n$ by $R$, provided
that the center of the mass $\int_{\mathbb{R}^d} xd\mu^n$ is normalized to be the origin.
By Prokhorov's theorem~\cite{Prokhorov}, $\{\mu^n\}$ is compact in the weak-$*$ topology of measures and also in the metric space induced by $\gamma$-Wasserstein distance $d_\gamma$ between probability measures (see \cite{GS,V} for definition and basic properties). Then there is a measured $\mu^*$  supported on $B(0,R)$ such that 
\[
    \mu^n \rightharpoonup \mu^* \qquad \mbox{and} \qquad d_\gamma(\mu^n,\mu^*)\to 0 
\]
as $n$ goes to infinity. Note that the notion of convergence of a sequence of probability measures 
in $d_\gamma$ is equivalent to weak convergence of the measures plus convergence of 
the moments of order $\gamma$, see \cite[Chapter 9]{V}. Let $\nu$ be any probability measure 
on $\mathbb{R}^d$ with bounded moment of order $\gamma$, then $E[\nu]<\infty$. Moreover, there 
is a sequence of discrete measures $\nu^n$
of the form
\[
 \nu^n = \frac{1}{n} \sum_{j=1}^n \delta_{y_j^n},
\]
such that $d_\gamma(\nu^n,\nu)\to 0$, and thus $\nu^n \rightharpoonup \nu$, see \cite{GS,V}. 
By the definition of $E_n$ in \eqref{eq:Indefn}, we deduce
\[
 E[\nu^n]= \frac12\int_{\mathbb{R}^d\times \mathbb{R}^d} W(x-y)d\nu^n(x)d\nu^n(y) \geq  
 \frac12\int_{\mathbb{R}^d\times \mathbb{R}^d} W(x-y)d\mu^n(x)d\mu^n(y) =
E[\mu^n].
\]
On the other hand, since
\[
    d_\gamma(\mu^n \otimes \mu^n , \mu^*\otimes \mu^*)\to 0, \qquad
    d_\gamma(\nu^n \otimes \nu^n , \nu\otimes \nu)\to 0\,,
\]
as $n\to\infty$, and the function $w(x-y) = |x-y|^\gamma/\gamma-|x-y|^\alpha/\alpha$ is
Lipschitz continuous on bounded sets in $\mathbb{R}^d\times \mathbb{R}^d$ with growth of order $\gamma$ at infinity, then 
\begin{align*}
E[\mu^*] &= \frac12\int_{\mathbb{R}^d\times \mathbb{R}^d} W(x-y)d\mu^*(x)d\mu^*(y) =  \lim_{n\to\infty}  
\frac12\int_{\mathbb{R}^d\times \mathbb{R}^d} W(x-y)d\mu^n(x)d\mu^n(y) \cr
&\leq  \lim_{n\to\infty} \frac12\int_{\mathbb{R}^d\times \mathbb{R}^d} W(x-y)d\nu^n(x)d\nu^n(y) = E[\nu].
\end{align*}
Therefore, $\mu^*$ must be a global minimizer of $E[\mu]$ in the set of probability measures with bounded moments of order $\gamma$.
\end{proof}

\begin{rem}
Global minimizers of the energy in the continuum setting might be a convex combination of a finite number of Dirac Deltas. Numerical experiments suggest that it is always the case in the range $2<\alpha<\gamma$. It is an open problem in this range to show that global minimizers in the discrete case do not change (except symmetries) for $n$ large enough and coincide with global minimizers of the continuum setting.
\end{rem}


\section{Further Remarks in one dimension}

In this section, we concentrate on the one dimensional case ($d=1$) for more refined properties. 

\subsection{Confinement of Discrete Global Minimizers}
We check first how sharp are the conditions on the exponents of the potential to get the confinement of global discrete minimizers independently of $n$.
In fact, when the potential is very repulsive at the origin, we can show that a uniform bound in $n$ of the diameter of global minimizers of the discrete setting does not hold. If $x$ is a minimizer of $E_n(x)$, we will always 
assume that the labelling of $x_i$s is in increasing order:
$
    x_1\leq x_2 \cdots \leq x_n.
$

\begin{thm}
    Suppose $\alpha<\gamma<0$ and $\alpha<-2$. If $x$ is a minimizer of $E_n$, then
    there exists a constant $C_{\alpha,\gamma}$ such that for $n$ large
    enough 
\[
    x_n-x_1 \geq C_{\alpha,\gamma}\, n^{1+\frac{2}{\alpha}}
\]
holds.
\end{thm}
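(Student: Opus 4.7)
The plan is to show that at any minimizer the shortest consecutive gap $h := \min_{1 \leq i \leq n-1}(x_{i+1}-x_i)$ cannot shrink faster than $n^{2/\alpha}$, and then to convert this into the claimed diameter bound via the one-dimensional pigeonhole $h \leq (x_n-x_1)/(n-1)$. The intuition is that a very short gap dumps a huge positive $w(h)$ into $E_n$, while the remaining $n(n-1)-2$ ordered pair contributions can produce at most an $O(n^2)$ amount of negativity, since each is bounded below by the profile minimum $w(1)=\frac{1}{\gamma}-\frac{1}{\alpha}$.

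First I would record two elementary inequalities. Singling out the two ordered pairs realizing $h$ and bounding every other pair contribution by $w(1)$ gives
\begin{equation*}
    E_n(x) \;\geq\; 2\,w(h) + \bigl(n(n-1)-2\bigr) w(1).
\end{equation*}
On the other side, the test configuration $y=(1,2,\ldots,n)$ satisfies $E_n(y)<0$ because $w(k)<0$ for every integer $k \geq 1$ (the profile $w$ has its minimum at $r=1$, is negative on $[r_1,\infty)$ for some $r_1\in(0,1)$, and tends to $0^-$ at infinity), so any minimizer satisfies $E_n(x) \leq E_n(y) \leq 0$. Combining the two bounds kills $E_n(x)$ and yields $w(h) \leq K_{\alpha,\gamma}\,n^2$ for a constant depending only on $\alpha,\gamma$.

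Next I would turn this into a quantitative lower bound on $h$. Writing $w(h)=\frac{h^\alpha}{|\alpha|}-\frac{h^\gamma}{|\gamma|}$ and using $\gamma-\alpha>0$, there is a threshold $h_0=h_0(\alpha,\gamma)>0$ such that $w(h) \geq \frac{h^\alpha}{2|\alpha|}$ whenever $0<h \leq h_0$ (the $h^\gamma$ term becomes subdominant as $h\to 0$). A simple dichotomy then applies: either $h \geq h_0$, which already beats the desired bound for $n$ large, or $h<h_0$, in which case $h^\alpha \leq 2|\alpha|\,K_{\alpha,\gamma}\,n^2$; taking the $1/\alpha$-power (and flipping the inequality since $\alpha<0$) gives $h \geq c_{\alpha,\gamma}\,n^{2/\alpha}$.

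The one-dimensional geometry closes the argument: since the $n-1$ consecutive gaps sum to $x_n-x_1$, we have $h \leq (x_n-x_1)/(n-1)$, so $x_n-x_1 \geq (n-1)\,h \geq C_{\alpha,\gamma}\,n^{1+2/\alpha}$ for $n$ large enough. The one delicate point --- and the only place that genuinely uses $\alpha<-2$ --- is in this last step: the exponent $1+2/\alpha$ is positive precisely when $\alpha<-2$, so the diameter bound actually grows with $n$; the ``$n$ large enough'' hypothesis is also needed to guarantee that the forced value of $h$ is indeed below $h_0$, so that the estimate $w(h) \geq \frac{h^\alpha}{2|\alpha|}$ is legitimately applied.
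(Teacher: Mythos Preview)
Your proof is correct and follows essentially the same route as the paper's: both show that every (or the minimum) consecutive gap must satisfy $w(h)\lesssim n^2$ by pitting the singular contribution $w(h)$ against the total negativity of the remaining $O(n^2)$ pairs (each bounded below by $w(1)=\frac{1}{\gamma}-\frac{1}{\alpha}$), then invert to get $h\gtrsim n^{2/\alpha}$ and sum the $n-1$ gaps. The only cosmetic differences are that the paper encodes the threshold implicitly via $w(a_n)=Cn^2$ rather than your explicit dichotomy on $h_0$, and it invokes the already established fact $I_n<0$ from the existence section instead of exhibiting the test configuration $y=(1,\ldots,n)$; your observation that $\alpha<-2$ enters only to make $1+2/\alpha>0$ is also consistent with the paper, which likewise never uses that hypothesis in the body of the argument.
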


\begin{proof} Set $C=\frac{1}{\alpha}-\frac{1}{\gamma}>0$. Denote by $a_n$
    the unique element of $\mathbb{R}$ such that 
\begin{equation}\label{eq:wacn}
w(a_n)=Cn^2.
\end{equation}
If $x$ is a
    minimizer of $E_n$, we claim that 
\begin{equation}\label{eq:1ddist}
    x_{i+1}-x_i \geq a_n,\qquad \forall i=1,\dots,n.
\end{equation}
Indeed otherwise,
\[
E_n(x) \geq w(x_{i+1}-x_i)-(n^2-n-1)C >
Cn^2 - (n^2-n-1)C=(n+1)C>0
\]
and we know that in this case $E_n<0$.
From~\eqref{eq:wacn} we derive
\[
    Cn^2 = \frac{ {a_n}^\gamma}{\gamma}-
    \frac{ {a_n}^\alpha}{\alpha}=
    -\frac{ {a_n}^\alpha}{\alpha}\left(
    1-\frac{\alpha}{\gamma}{a_n}^{\gamma-\alpha}
    \right)\geq -\frac{ {a_n}^\alpha}{2\alpha},
\]
for $n$ large enough (recall that $a_n\to 0$ when $n\to\infty$). It follows
that
\[
    a_n \geq \big(-2\alpha C n^2\big)^{\frac{1}{\alpha}}
    =\big(-2\alpha C\big)^{\frac{1}{\alpha}}n^{\frac{2}{\alpha}}.
\]
Combining this with~\eqref{eq:1ddist} we get
\[
    x_n-x_1 \geq (n-1)a_n \geq \frac{n}{2}\big(-2\alpha
    C\big)^{\frac{1}{\alpha}} n^{\frac{2}{\alpha}}
    =\frac{(-2\alpha C)^{\frac{1}{\alpha}}}{2}n^{1+\frac{2}{\alpha}}
\]
for $n$ large enough, proving the desired estimate with $C_{\alpha,\gamma}
=(-2\alpha C)^{\frac{1}{\alpha}}/2$.
\end{proof}

\begin{figure}[htp]
\begin{center}
\subfloat[$\alpha=-2.5$]{\includegraphics[keepaspectratio=true,
width=.43\textwidth]{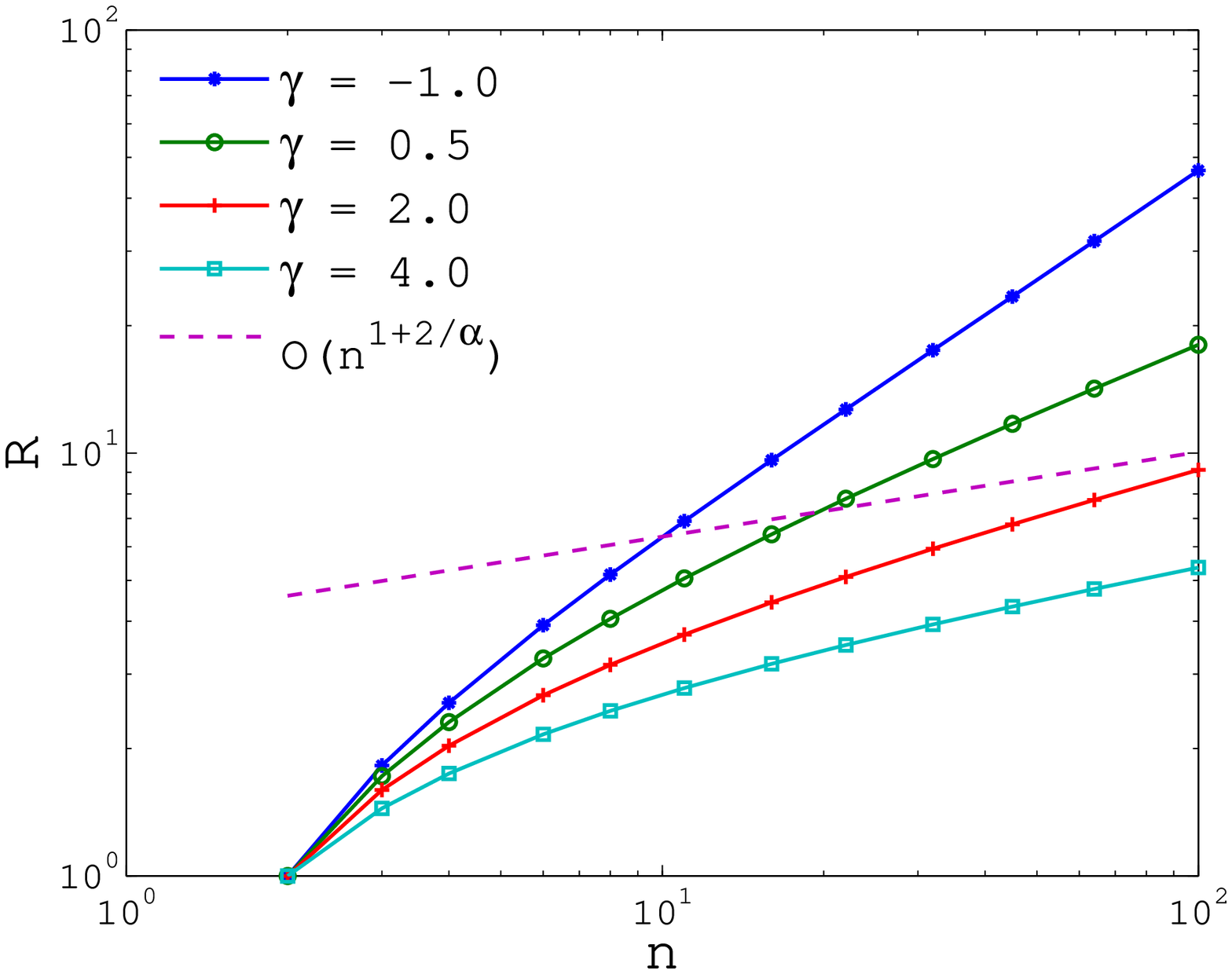}}
$~~$ 
\subfloat[$\alpha=-1.5$]{\includegraphics[keepaspectratio=true,
width=.43\textwidth]{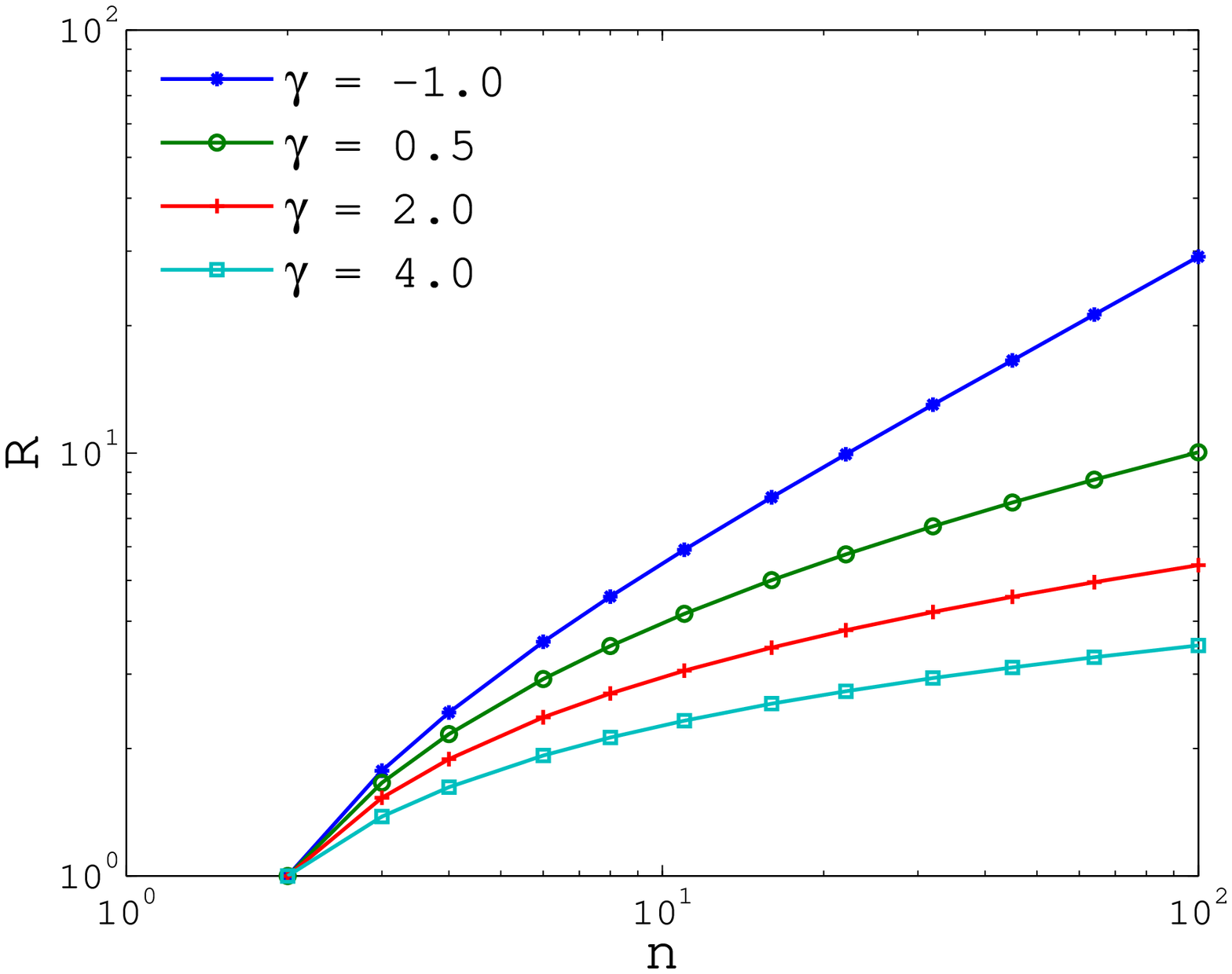}}
    \end{center}
\caption{The dependence of the diameter $R = \max_{i,j}|x_i-x_j|$ on the number of 
particles $n$.}
\label{fig:radius}
\end{figure}

This property for the minimizers of this very repulsive case is similar to H-stability in 
statistical mechanics~\cite{HStable}, where the minimal distance between 
two particles is expected to be constant when $n$ is large, and crystallization occurs. 
This also suggests that the lower bound $O(n^{1+\frac{2}{\alpha}})$ is not sharp, which is verified in Figure~\ref{fig:radius}. 

In fact, numerical experiments in \cite{BCLR2,BCY} suggest that confinement happens for $-1<\alpha <1$. It is an open problem to get a uniform bound in the support of the discrete minimizers as in Section 2 in this range. In the range $\alpha \leq -1$, our numerical simulations suggest that spreading of the support happens for all $\gamma$ with a decreasing spreading rate as $\gamma$ increases.

\subsection{Uniqueness of global minimizers}
We turn now to the issue of uniqueness (up to isometry) of global discrete and 
continuum minimizers. If $x$ is a minimizer of $E_n(x)$, we can always assume 
at the expense of a translation that the center of mass is zero, that 
is $\frac{x_1+\cdots+x_n}{n}=0$. Let us recall that
\[
    E_n(x) = \sum_{i\neq j}^n w(|x_i-x_j|)
\]
with the convention that $x_i\neq x_j$ when $i\neq j$, $\alpha <0$. 

\begin{lem}\label{lem:unique1d}
    Suppose that $\alpha \leq 1$, $\gamma\geq 1$, and $\alpha < \gamma$. Let $x,y$ be two points of
    $\mathbb{R}^n$ such that 
\begin{subequations}\label{eq:1dminxy}
\begin{equation}\label{eq:1dminx}
\frac{x_1+\cdots+x_n}{n}=0,\ x_1\leq x_2 \leq \cdots \leq x_n,
\end{equation}
\begin{equation}\label{eq:1dminy}
\frac{y_1+\cdots+y_n}{n}=0,\ y_1\leq y_2 \leq \cdots \leq y_n. 
\end{equation}
\end{subequations}
then 
\[
    E_n\left(\frac{x+y}{2}\right)< \frac{E_n(x)+E_n(y)}{2}
\]
unless $x=y$.
\end{lem}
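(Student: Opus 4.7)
My plan is to reduce the inequality to a pointwise strict-convexity statement for the one-dimensional radial potential $w(r) = r^\gamma/\gamma - r^\alpha/\alpha$. The key structural observation is that the orderings in~\eqref{eq:1dminx}--\eqref{eq:1dminy} survive midpoint combination: if $x_i\le x_j$ and $y_i\le y_j$ then $\frac{x_i+y_i}{2}\le \frac{x_j+y_j}{2}$. Hence for each of $x$, $y$, and $z := \frac{x+y}{2}$, the absolute values $|x_i-x_j|$ coincide with the signed differences $x_j - x_i$ for $i<j$. Using the symmetry of the double sum $\sum_{i\neq j}$, I would rewrite
\[
E_n(x) = 2\sum_{1\le i<j\le n} w(x_j-x_i),
\]
and similarly for $y$ and $z$, so that the $(i,j)$-term of $E_n(z)$ is exactly $2\,w\!\left(\frac{(x_j-x_i)+(y_j-y_i)}{2}\right)$.

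The heart of the matter is then the strict convexity of $w$ on $[0,\infty)$ (or on $(0,\infty)$ in the singular regime $\alpha\le 0$, where configurations with coincident points carry infinite energy and can be discarded). A direct computation gives
\[
w''(r) = (\gamma-1)\, r^{\gamma-2} + (1-\alpha)\, r^{\alpha-2}, \qquad r>0,
\]
a formula that also covers the boundary exponents $\alpha=0$ (logarithm) and $\alpha=1$ (linear correction). Under the hypotheses $\alpha\le 1\le \gamma$ both terms are nonnegative, and the strict inequality $\alpha<\gamma$ rules out having $\gamma-1=0$ and $1-\alpha=0$ simultaneously, so at least one term is positive and $w''(r)>0$ on $(0,\infty)$. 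Applying the corresponding strict midpoint-convexity inequality term by term and summing yields $E_n(z)\le \frac12(E_n(x)+E_n(y))$, with equality if and only if $x_j - x_i = y_j - y_i$ for every pair $i<j$.

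For the equality case, it suffices to specialise to consecutive indices $j=i+1$: the resulting identities $x_{i+1}-x_i = y_{i+1}-y_i$ imply that $y_i - x_i$ takes the same value $c$ for all $i$, and the shared zero-mean constraint forces $c=0$, so $x=y$. The main subtlety I expect in writing this up is showing that strict convexity of $w$ extends all the way to the boundary point $r=0$ in the nonsingular regime $\alpha>0$, since a pair $(i,j)$ with $x_j=x_i$ but $y_j\neq y_i$ must still contribute a strict inequality. This follows from the elementary fact that a continuous function on $[0,\infty)$ with strictly positive second derivative on $(0,\infty)$ is strictly convex on the closed half-line, which one can verify with a standard mean-value or Taylor-remainder argument.
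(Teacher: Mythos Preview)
Your proposal is correct and follows essentially the same route as the paper: verify $w''(r)>0$ on $(0,\infty)$, apply strict convexity of $w$ term by term using the common ordering of $x$ and $y$, and read off $x=y$ from the equality case together with the zero-mean constraint. You are in fact more explicit than the paper about why the shared ordering is needed (so that $|z_i-z_j|$ is the genuine midpoint of $|x_i-x_j|$ and $|y_i-y_j|$) and about the boundary behaviour at $r=0$; the paper leaves both of these implicit.
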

\begin{proof}
One has $w''(r)=(\gamma-1)r^{\gamma-2}-(\alpha-1)r^{\alpha-2}>0$, for all $r>0$.
Thus  $w$ is strictly convex. Then one has by the strict
convexity of $w$,
\begin{align*}
E_n\left(\frac{x+y}{2}\right) 
&= \sum_{i\neq j}^n w\left(\Big|\frac{x_i+y_i}{2}-\frac{x_j+y_j}{2}\Big|\right) \cr 
&= \sum_{i\neq j}^n w\left(\Big|\frac{x_i-x_j}{2}+\frac{y_i-y_j}{2}\Big|\right) \cr 
&\leq \frac{1}{2}\left(
\sum_{i\neq j}^n w(|x_i-x_j|)+\sum_{i\neq j}^n w(|y_i-y_j|) 
\right) =\frac{E_n(x)+E_n(y)}{2}
\end{align*}
The equality above is strict unless $x_i-x_j=y_i-y_j$ for all $i,j$, that is
$x_{i+1}-x_i=y_{i+1}-y_i$. Therefore $x=y$.
\end{proof}

As a consequence, we can now state the following result regarding the uniqueness of global discrete minimizers.

\begin{thm}\label{thm:unique}
    Suppose $\alpha \leq 1$, $\gamma\geq 1$, and $\alpha < \gamma$. Up to translations, the minimizer $x$
    of $E_n$ is unique and symmetric with respect to its center of mass.
\end{thm}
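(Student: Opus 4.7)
The plan is to combine the strict convexity established in Lemma~\ref{lem:unique1d} with an averaging argument, and then exploit the translation/reflection invariance of $E_n$ to get the symmetry for free.

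First I would settle \emph{uniqueness}. Let $x$ and $y$ be two minimizers of $E_n$. Because $E_n$ is invariant under translations and is a symmetric function of the unordered tuple, I may relabel and translate each of them so that they both satisfy \eqref{eq:1dminx} and \eqref{eq:1dminy} respectively; note that $E_n(x)=E_n(y)=I_n$ is unchanged. Consider the midpoint configuration $z=(x+y)/2$. The sum of the coordinates of $z$ is $0$, and since $x_i\leq x_{i+1}$ and $y_i\leq y_{i+1}$, the entries $z_i=(x_i+y_i)/2$ are still in nondecreasing order, so $z$ also satisfies \eqref{eq:1dminx}. Lemma~\ref{lem:unique1d} then gives
\[
E_n(z)\leq \frac{E_n(x)+E_n(y)}{2}=I_n,
\]
with strict inequality unless $x=y$. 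Since $I_n$ is the infimum of $E_n$, strict inequality is impossible, hence $x=y$. This shows that, up to translation and relabelling, the minimizer is unique.

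Next I would deduce \emph{symmetry with respect to the center of mass}. After translation we may assume the minimizer $x$ satisfies \eqref{eq:1dminx}. Define the reflected tuple $\tilde x$ by $\tilde x_i=-x_{n+1-i}$ for $i=1,\dots,n$. Then $\tilde x_1\leq\cdots\leq\tilde x_n$, the center of mass is still $0$, and the multiset of pairwise distances is the same as that of $x$, so $E_n(\tilde x)=E_n(x)=I_n$, i.e.\ $\tilde x$ is again a minimizer satisfying \eqref{eq:1dminx}. The uniqueness just proved forces $\tilde x=x$, that is $x_i=-x_{n+1-i}$ for all $i$, which is exactly the symmetry statement.

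I do not expect any serious obstacle: the content of the argument is entirely in Lemma~\ref{lem:unique1d}, and what remains is only the verification that the midpoint $z$ is an admissible competitor, which rests on the elementary observation that the pointwise average of two sorted sequences is sorted. The only mild subtlety is that $E_n$ is defined on ordered tuples but is actually a symmetric function, so relabelling does not change its value; this is what allows us to simultaneously sort $x$ and $y$ without loss of generality before applying the midpoint convexity.
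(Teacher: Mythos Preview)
Your proposal is correct and follows essentially the same route as the paper: apply Lemma~\ref{lem:unique1d} to two normalized minimizers to force $x=y$, then reflect the unique minimizer and use uniqueness again to get the symmetry $x_i=-x_{n+1-i}$. The only difference is that you add the (harmless but unnecessary) verification that the midpoint $z$ is itself sorted with zero center of mass; Lemma~\ref{lem:unique1d} only requires this of $x$ and $y$, not of $(x+y)/2$.
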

\begin{proof} Let $x$, $y$ be two minimizers of $E_n$
    satisfying~\eqref{eq:1dminxy}. If $x\neq y$, by Lemma~\ref{lem:unique1d},
    one has
\[
    E_n\left(\frac{x+y}{2}\right)<\frac{E_n(x)+E_n(y)}{2}=I_n
\]
and a contradiction. This shows the uniqueness of a minimizer satisfying~\eqref{eq:1dminx}.
Denote now by $s$ the symmetry defined by 
$
    s(\xi) = -\xi$, $\xi \in \mathbb{R}.
$
If $x$ is a minimizer of $E_n(x)$ satisfying~\eqref{eq:1dminx} then $y$ defined
by
\[
    y_i=s(x_{n+1-i})\qquad i=1,\cdots,n
\]
is also a minimizer satisfying~\eqref{eq:1dminy}. Thus by uniqueness
\[
    x_i=-x_{n+1-i}\qquad i=1,\cdots,n,
\]
and this completes the proof of the theorem.
\end{proof}

\begin{rem}[Uniqueness and displacement convexity in one dimension]
Lemma \ref{lem:unique1d} and Theorem \ref{thm:unique} are just discrete versions 
of uniqueness results for the continuum interaction functional~\eqref{eq:contE}. 
In the seminal work of R. McCann \cite{Mc} that introduces the notion of displacement 
convexity, he already dealt with the uniqueness (up to translation) of the interaction 
energy functional \eqref{eq:contE} using the theory of optimal transportation: 
if $W$ is strictly convex in $\mathbb{R}^d$, then the global minimizer is unique 
among probability measures by fixing the center of mass, as the energy $E[\mu]$ is 
(strictly) displacement convex. However,  the displacement convexity of a functional 
is less strict in one dimension than that in higher dimensions. As proven in~\cite{CFP}, 
to check the displacement convexity of the energy $E[\mu]$ in one dimension, it is 
enough to check the convexity of the function $w(r)$ for $r>0$. Therefore, if $w(r)$ 
is strictly convex in $(0,\infty)$, then the energy functional \eqref{eq:contE} is strictly 
displacement convex for probability measures with zero center of mass. As a consequence, 
the global minimizer of \eqref{eq:contE} in the set of probability measures is unique up 
to translations. Lemma \ref{lem:unique1d} shows that this condition is equivalent 
to $\alpha \leq 1$, $\gamma\geq 1$, and $\alpha < \gamma$, for power-law potentials. 
Finally, the convexity of $E_n$ in Lemma~\ref{lem:unique1d} is just the displacement
 convexity of the energy functional \eqref{eq:contE} restricted to discrete measures. 
We included the proofs of the convexity and uniqueness because they are quite straightforward 
in this case, without appealing to more involved concepts in optimal transportation.
\end{rem}

\begin{rem}[Explicit convergence to uniform density] As a final example we consider the case where $\gamma=2$,
 $\alpha=1$, which corresponds to quadratic attraction and Newtonian repulsion in 
one dimension (see \cite{FellnerRaoul1}). When $x$ is a minimizer of $E_n(x)$, we have by \eqref{jjj} that
\[
    \sum_{j\neq k} (x_k-x_j) = 
    \sum_{j\neq k} \mbox{sign}(x_k-x_j) = k-n-1,\quad \forall\ k=1,\cdots,n.
\]
Replacing the index $k$ by $k+1$, the equation becomes
\[
\sum_{j\neq {k+1}} (x_{k+1}-x_j) = 
\sum_{j\neq {k+1}} \mbox{sign}(x_{k+1}-x_j) = 2k-n+1,\quad \forall\
k=1,\cdots,n-1.
\]
Subtracting the two equations above, we get
\[
    n(x_{k+1}-x_k)=2,\ \forall\ k=1,\cdots,n-1,
\]
that is $x_{k+1}-x_k=\frac{2}{n}$, for all $k=1,\cdots,n-1$.

This shows that in the case $\gamma=2$ and $\alpha=1$, the points $x_i$ are uniformly 
distributed;  as $n$ goes to infinity, the corresponding discrete measure 
$\mu=\frac{1}{n}\sum_{i=1}^n \delta_{x_i}$ converges to the uniform probability 
measure on the interval $[-1,1]$. This uniform density is known to be the global 
minimizer of the energy $E[\mu]$ in the continuum setting, see \cite{FellnerRaoul1,CFT}.
\end{rem}

\section*{Acknowledgement}
JAC acknowledges support from projects MTM2011-27739-C04-02,
2009-SGR-345 from Ag\`encia de Gesti\'o d'Ajuts Universitaris i de Recerca-Generalitat 
de Catalunya, and the Royal Society through a Wolfson Research Merit Award. JAC and YH 
acknowledges support from the Engineering and Physical Sciences Research Council (UK) grant 
number EP/K008404/1. MC acknowledges the support of a J. Nelder fellowship from Imperial 
College where this work was initiated. The research leading to these results has received 
funding from Lithuanian-Swiss cooperation programme to reduce economic and social disparities within the enlarged European Union under project agreement No CH-3-SMM-01/0.  The research of MC was also supported by  the Swiss National Science Foundation under the contract $\#$  200021-146620. Finally, the paper was completed  during a visit of MC at the Newton Institute in Cambridge. The nice atmosphere of the institute is greatly acknowledged.

\end{document}